\newcommand{\vertiii}[1]{{\left\vert\kern-0.25ex\left\vert\kern-0.25ex\left\vert #1 
    \right\vert\kern-0.25ex\right\vert\kern-0.25ex\right\vert}}
\newcommand{\qedtheorem}{\hfill \ensuremath{\Diamond}}
\newtheorem{dfn}{Definition}
\newtheorem{lem}[dfn]{Lemma}
\newtheorem{thm}[dfn]{Theorem}
\newtheorem{cor}[dfn]{Corollary}
\newtheorem{rem}[dfn]{Remark}
\newtheorem{prob}[dfn]{Problem}
\newtheorem{alg}[dfn]{Algorithm}
\def\BibTeX{{\rm B\kern-.05em{\sc i\kern-.025em b}\kern-.08em
    T\kern-.1667em\lower.7ex\hbox{E}\kern-.125emX}}
\begin{document}
\title{Mutual Information Optimal Control of Discrete-Time Linear Systems}
\author{Shoju Enami and Kenji Kashima
\thanks{This work was supported by JSPS KAKENHI Grant Number 21H04875.}
\thanks{The authors are with the Graduate School of Informatics, Kyoto University, Kyoto, Japan
        (e-mail: enami.shoujyu.57r@st.kyoto-u.ac.jp, kk@i.kyoto-u.ac.jp)}
}

\maketitle
\thispagestyle{empty} 

\begin{abstract}
In this paper, we formulate a mutual information optimal control problem (MIOCP) for discrete-time linear systems. 
This problem can be regarded as an extension of a maximum entropy optimal control problem (MEOCP).
Differently from the MEOCP where the prior is fixed to the uniform distribution, the MIOCP optimizes the policy and prior simultaneously.
As analytical results, under the policy and prior classes consisting of Gaussian distributions, we derive the optimal policy and prior of the MIOCP with the prior and policy fixed, respectively.
Using the results, we propose an alternating minimization algorithm for the MIOCP.
Through numerical experiments, we discuss how our proposed algorithm works.
\end{abstract}

\begin{IEEEkeywords}
Mutual information, optimal control, stochastic control.
\end{IEEEkeywords}

\section{Introduction}\label{sec:introduction}

\IEEEPARstart{A} framework that introduces randomness into actions and control inputs through entropy regularization has been studied mainly in the fields of reinforcement learning (RL)\cite{haarnoja2017reinforcement, haarnoja2018soft} and optimal control \cite{ito2023maximum,ito2024maximum}.
Entropy regularization brings various benefits such as promoting exploration in RL \cite{haarnoja2017reinforcement} and enhancing robustness against disturbances \cite{eysenbach2021maximum}, to name a few.
However, on the other hand, stochastic actions and control inputs may risk worsening the net control cost.
These advantages and disadvantages are attributed to the effect of the entropy regularization term, which makes the probability distributions of inputs approach the uniform distribution in the sense of Kullback–Leibler (KL) divergence.

In \cite{grau2018soft, leibfried2020mutual}, mutual information RL was proposed as an extension of maximum entropy RL to balance control performance and the benefits provided by noise induced by the regularization term.
In mutual information RL, not only the policy but also the prior, which is fixed to the uniform distribution in maximum entropy RL, is optimized.
Practical algorithms that alternately optimize the policy and prior have been proposed.
The experimental results in \cite{grau2018soft, leibfried2020mutual} show that mutual information RL yields better learning outcomes than maximum entropy RL in some problem settings.

However, to the best of our knowledge, there is little to no theoretical work on this topic.
Against this background, addressing the problem setting of linear systems with quadratic costs is an important challenge as a first step toward theoretical understanding \cite{recht2019tour, yang2019provably}.
Motivated by this, in this paper, we consider a mutual information optimal control problem (MIOCP) for linear systems with quadratic costs and Gaussian policy and prior classes.
The main contribution of this paper is the derivation of the optimal policy and prior for a fixed prior and policy in closed form, respectively.
In addition, we propose an alternating minimization algorithm for the MIOCP using the main result and discuss how the algorithm works through numerical examples.
The results of this paper are expected to play an important role as a theoretical foundation for mutual information regularization in the future.
In fact, in maximum entropy optimal control, analysis results in such simple problem settings have elucidated the equivalence with the Schr\"{o}dinger bridge \cite{ito2023maximum} and have been used in Differential Dynamic Programming \cite{so2022maximum}.
These research directions are also important in mutual information optimal control for providing theoretical insights and guarantees, and for advancing it into a practical method; the results of this paper are essential in that regard.

The MIOCP can be formulated as an optimal control problem with a regularization term of the mutual information between the state and the input.
Similarly, \cite{tanaka2017lqg} tackled a control problem of minimizing the directed information from the state to the input under a control performance constraint.
There is a deep relationship between mutual information and directed information: the mutual information between two random variables $x$ and $u$ is expressed as the sum of the directed information from $x$ to $u$ and from $u$ to $x$ (conservation of information \cite{massey2005conservation}).
Reference \cite{tanaka2017lqg} incorporated the directed information under the background of applications such as control under limited state information due to a noisy channel from a plant to a controller, whereas mutual information regularization aims to balance control performance and the benefits provided by noise.

This paper is organized as follows: In Section \ref{sec:Problem Formulation}, we formulate an MIOCP for discrete-time linear systems.
In Section \ref{sec:Theoretical Results}, we derive the optimal policy and prior of the MIOCP with the prior and policy fixed, respectively.
Section \ref{sec:Alternating Minimization Algorithm and Numerical Examples} proposes an alternating minimization algorithm that optimizes the policy and prior alternatively, and then shows some numerical examples of the proposed algorithm.
In Section \ref{sec:Conclusion}, we conclude this paper.

\paragraph*{Notation}
The set of all integers that are larger than or equal to $a$ is denoted by $\mathbb{Z}_{\geq a}$.
The set of integers $\{k,k+1,\ldots, l\}(k<l)$ is denoted by $\llbracket k, l\rrbracket$.
For two symmetric matrices $A, B$ of the same size, we write $A \succ B$ (resp. $A \succeq B$) if $A-B$ is positive definite (resp. positive semi-definite).
The identity matrix is denoted by $I$, and its dimension depends on the context.
The Euclidean norm is denoted by $\|\cdot \|$.
The determinant and trace of $A \in \mathbb{R}^{n\times n}$ is denoted by $|A|$ and $\mathrm{Tr}[A]$.
For $x \in \mathbb{R}^{n}$ and $A \succeq 0$ of size $n$, denote $\|x\|_{A} := (x^{\top}Ax)^{\frac{1}{2}}$.
The expected value of a random variable is denoted by $\mathbb{E}[\cdot]$.
A multivariate Gaussian distribution of dimension $n$ with mean $\mu \in \mathbb{R}^{n}$ and covariance matrix $\Sigma \succ 0$ is denoted by $\mathcal{N}(\mu,\Sigma)$.
When we emphasize that $w$ follows $\mathcal{N}(\mu, \Sigma)$, $w$ is described explicitly as $\mathcal{N}(w | \mu, \Sigma)$.
The KL divergence between probability distributions $p$ and $q$ is denoted by $\mathcal{D}_{\text{KL}}[p \| q]$ when it is defined.
We use the same symbol for a random variable and its realization.
We abuse the notation $p$ as the distribution of a random variable depending on the context.

\section{Problem Formulation} \label{sec:Problem Formulation}
This paper considers the following problem.
\begin{prob}
    Find a pair of a policy $\pi = \{\pi_{k}\}_{k=0}^{T-1}$ and a prior $\rho = \{\rho_{k}\}_{k=0}^{T-1}$ that solves
    \begin{align}
        &\min_{(\pi, \rho)} J(\pi,\rho)\nonumber\\
        &\hspace{20pt}:=\mathbb{E}\left[ \sum_{k=0}^{T-1} \left\{\frac{1}{2}\|u_{k}\|_{R_{k}}^{2} + \varepsilon \mathcal{D}_{\text{KL}}[\pi_{k}(\cdot|x_{k}) \| \rho_{k}(\cdot)] \right\}\right.\nonumber \\
        &\left. \hspace{33pt}+ \frac{1}{2}\|x_{T}-\mu_{x_{\text{fin}}}\|_{F}^{2} \right] \label{eq:objective function of MIOCP without terminal constraint}\\
        &\mbox{s.t. }x_{k+1} = A_{k} x_{k} + B_{k} u_{k} +  w_{k}, \label{eq:linear system}\\
        &\hspace{19pt}u_{k} \sim \pi_{k}(\cdot |x) \ \mbox{given }x=x_{k}, \label{eq:stochastic feedback input}\\
        &\hspace{19pt}w_{k} \sim \mathcal{N}(0, \Sigma_{w_{k}}), \label{eq:process noise}\\
        &\hspace{19pt}x_{0} \sim \mathcal{N}(\mu_{x_{\text{ini}}}, \Sigma_{x_{\text{ini}}}), \label{eq:initial condition}
    \end{align}
    where $\varepsilon >0,  T \in \mathbb{Z}_{\geq 1}, x_{k},\mu_{x_{\text{ini}}}, \mu_{x_{\text{fin}}} \in \mathbb{R}^{n}, u_{k} \in \mathbb{R}^{m}, A_{k} \in \mathbb{R}^{n \times n}, B \in \mathbb{R}^{n \times m}, R_{k}, F, \Sigma_{w_{k}}, \Sigma_{x_{\text{ini}}}\succ 0$.
    A stochastic policy $\pi_{k}(\cdot|x)$ denotes a conditional probability distribution of $u_{k}$ given $x_{k} = x$ and $\rho_{k}(\cdot)$ is a probability distribution on $\mathbb{R}^{m}$. \qedtheorem
    \label{prob:MIOCP without terminal constraint}
\end{prob}

\begin{rem}
    As derived in Remark \ref{rem:derivation of MI from KL}, by optimizing only $\rho$ for Problem \ref{prob:MIOCP without terminal constraint} with $\pi$ fixed, Problem \ref{prob:MIOCP without terminal constraint} can be rewritten as follows:
    \begin{align*}
        &\min_{\pi} \mathbb{E}\left[ \sum_{k=0}^{T-1} \left\{\frac{1}{2}\|u_{k}\|_{R_{k}}^{2} + \varepsilon \mathcal{I}(x_{k},u_{k}) \right\}+ \frac{1}{2}\|x_{T}-\mu_{x_{\text{fin}}}\|_{F}^{2} \right] \nonumber\\
        &\mbox{s.t. }\eqref{eq:linear system}\text{--}\eqref{eq:initial condition},
    \end{align*}
    where
    \begin{align*}
        &\mathcal{I}(x,u):= \int_{\mathbb{R}^{n}}\int_{\mathbb{R}^{m}}p(x,u)\log \frac{p(x,u)}{p(x)p(u)}dudx
    \end{align*}
    is the mutual information between $x$ and $u$.
    This is the reason why we call Problem \ref{prob:MIOCP without terminal constraint} an MIOCP with reference to mutual information RL \cite{grau2018soft,leibfried2020mutual}.\qedtheorem
    \label{rem:origin of mutual information optimal density control}
\end{rem}

\section{Theoretical Results} \label{sec:Theoretical Results}

In this section, we provide analytical results as preparation for the algorithm of Problem \ref{prob:MIOCP without terminal constraint}, which is proposed in Section \ref{subsec:Alternating Minimization Algorithm}.
Because dealing with Problem \ref{prob:MIOCP without terminal constraint} for general policies and priors is challenging, we reduce the difficulty by focusing on Gaussian distributions.
Let us consider the following policy and prior classes.
\begin{align*}
    \mathcal{P} := \{&  \pi = \{ \pi_{k} \}_{k=0}^{T-1} \mid \pi_{k}(\cdot|x) = \mathcal{N}(P_{k}x + q_{k}, \Sigma_{\pi_{k}}),\\
    &P_{k} \in \mathbb{R}^{m\times n}, q_{k} \in \mathbb{R}^{m}, \Sigma_{\pi_{k}} \succ 0 \},\\
    \mathcal{R} := \{&  \rho = \{ \rho_{k} \}_{k=0}^{T-1} \mid \nonumber  \\
    &\rho_{k}(\cdot) = \mathcal{N}(\mu_{\rho_{k}}, \Sigma_{\rho_{k}}), \mu_{\rho_{k}} \in \mathbb{R}^{m}, \Sigma_{\rho_{k}} \succ 0 \}.
\end{align*}
At first glance, the policy class $\mathcal{P}$ seems to restrict the feasible region of $\pi$ overly due to the simplicity of the form of $\pi_{k}$, where the mean is just an affine transformation of $x$ and the covariance matrix does not depend on $x$.
To show the validity of the choice of $\mathcal{P}$, we first show that the policy class $\mathcal{P}$ contains the unique optimal policy of Problem \ref{prob:MIOCP without terminal constraint} with $\rho \in \mathcal{R}$ fixed.
Next, we derive the optimal prior of Problem \ref{prob:MIOCP without terminal constraint} with $\pi \in \mathcal{P}$ fixed.

\subsection{The Optimal Policy for a Fixed Prior}\label{subsec:The Optimal Policy for a Fixed Prior}

We start by introducing the following lemma.
\begin{lem} 
    For a given $\rho \in \mathcal{R},\rho_{k}=\mathcal{N}(\mu_{\rho_{k}},\Sigma_{\rho_{k}})$, define $\Pi_{k}$ as the unique solution to the following Riccati equation:
    \begin{align}
        \Pi_{k} = & A_{k}^{\top} \Pi_{k+1} A_{k} -A_{k}^{\top} \Pi_{k+1} B_{k}\nonumber \\
        &\times \{R_{k} + B_{k}^{\top} \Pi_{k+1} B_{k} + \varepsilon\Sigma_{\rho_{k}}^{-1}\}^{-1}B_{k}^{\top} \Pi_{k+1} A_{k},\nonumber \\
        &k \in \llbracket 0, T-1\rrbracket ,\label{eq:Riccati difference equation}\\
        \Pi_{T} = & F. \label{eq:terminal condition of Riccati difference equation}
    \end{align}
    Then, $\Pi_{k}\succeq 0, k \in \llbracket 0,T\rrbracket$.
    In addition, if $A_{k},k \in \llbracket 0,T-1 \rrbracket$ is invertible, then $\Pi_{k}, k \in \llbracket 0,T\rrbracket$ is also invertible.
    \qedtheorem
    \label{lem:positive semidefiniteness of Pi}
\end{lem}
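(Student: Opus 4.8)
The plan is to prove both claims by backward induction on $k$, starting from the terminal condition $\Pi_T = F \succ 0$. First I would observe that the recursion \eqref{eq:Riccati difference equation} is well posed: since $R_k \succ 0$, $\varepsilon\Sigma_{\rho_k}^{-1} \succ 0$, and $B_k^\top \Pi_{k+1} B_k \succeq 0$ whenever $\Pi_{k+1} \succeq 0$, the matrix $R_k + B_k^\top \Pi_{k+1} B_k + \varepsilon\Sigma_{\rho_k}^{-1}$ is positive definite and hence invertible, so $\Pi_k$ is uniquely determined from $\Pi_{k+1}$. It is convenient to write $\bar R_k := R_k + \varepsilon\Sigma_{\rho_k}^{-1} \succ 0$ and
\begin{align*}
 S_k := \Pi_{k+1} - \Pi_{k+1} B_k \bigl(\bar R_k + B_k^\top \Pi_{k+1} B_k\bigr)^{-1} B_k^\top \Pi_{k+1},
\end{align*}
so that $\Pi_k = A_k^\top S_k A_k$; it then suffices to analyze $S_k$.

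For positive semidefiniteness, assuming inductively $\Pi_{k+1} \succeq 0$, I would use the completion-of-squares identity
\begin{align*}
 y^\top S_k y = \min_{v \in \mathbb{R}^{m}} \left\{ (y + B_k v)^\top \Pi_{k+1} (y + B_k v) + v^\top \bar R_k v \right\},
\end{align*}
which holds because the quadratic in $v$ on the right has Hessian $\bar R_k + B_k^\top \Pi_{k+1} B_k \succ 0$ and minimizer $v^\star = -(\bar R_k + B_k^\top \Pi_{k+1} B_k)^{-1} B_k^\top \Pi_{k+1} y$. Since $\Pi_{k+1} \succeq 0$ and $\bar R_k \succ 0$, the bracketed expression is nonnegative for every $v$, hence $S_k \succeq 0$ and therefore $\Pi_k = A_k^\top S_k A_k \succeq 0$. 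With $\Pi_T = F \succ 0 \succeq 0$ as the base case, this yields $\Pi_k \succeq 0$ for all $k \in \llbracket 0, T \rrbracket$.

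For the invertibility claim I would strengthen the induction hypothesis to $\Pi_{k+1} \succ 0$ (again with base case $\Pi_T = F \succ 0$). When $\Pi_{k+1}$ is invertible, the matrix inversion lemma applied to $S_k$ gives $S_k = \bigl( \Pi_{k+1}^{-1} + B_k \bar R_k^{-1} B_k^\top \bigr)^{-1}$, which is positive definite because $\Pi_{k+1}^{-1} \succ 0$ and $B_k \bar R_k^{-1} B_k^\top \succeq 0$. Then, since $A_k$ is assumed invertible, $\Pi_k = A_k^\top S_k A_k \succ 0$, closing the induction.

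The matrix identities are routine; the only point needing care is that in the positive-semidefinite step one cannot apply the matrix inversion lemma directly, since it requires $\Pi_{k+1}$ to be invertible, which is exactly why I would rely on the variational/completion-of-squares characterization of $S_k$ there. I expect this to be the main (and only mild) obstacle; the rest is bookkeeping with the backward Riccati recursion.
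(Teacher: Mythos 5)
Your proof is correct, and its skeleton (backward induction along the Riccati recursion, with the update rewritten via a matrix identity) matches the paper's. The one genuine difference is in how positive semidefiniteness is obtained. The paper factors the update through the matrix square root, writing
\begin{align*}
\Pi_{k} = A_{k}^{\top}\Pi_{k+1}^{1/2}\left\{ I+\Pi_{k+1}^{1/2}B_{k}(\varepsilon\Sigma_{\rho_{k}}^{-1}+R_{k})^{-1}B_{k}^{\top}\Pi_{k+1}^{1/2}\right\}^{-1}\Pi_{k+1}^{1/2}A_{k}
\end{align*}
via the Woodbury identity; this single expression is manifestly positive semidefinite and, when $A_{k}$ and $\Pi_{k+1}$ are invertible, manifestly a product of invertible factors, so both claims fall out of one identity. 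You instead split the work: a completion-of-squares (variational) characterization of $S_{k}$ for the semidefiniteness claim, which correctly sidesteps the fact that the plain matrix inversion lemma needs $\Pi_{k+1}$ invertible, and then the matrix inversion lemma under the strengthened hypothesis $\Pi_{k+1}\succ 0$ for invertibility. Your route is slightly longer but more elementary (no matrix square roots) and makes explicit why the naive application of the inversion lemma would fail in the semidefinite case; the paper's route is more compact because the square-root form of Woodbury covers both conclusions at once. Either argument is complete and correct.
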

\begin{proof}
    $\Pi_{T}=F\succ 0 $ holds trivially.
    From \eqref{eq:Riccati difference equation} and the Woodbury matrix identity, we have
    \begin{align}
        &\Pi_{T-1}\nonumber\\
        = &A_{T-1}^{\top}\Pi_{T}^{1/2}\left\{ I+\Pi_{T}^{1/2}B_{T-1}(\varepsilon\Sigma_{\rho_{T-1}}^{-1}+R_{T-1})^{-1} \right.\nonumber\\
        &\left.\times B_{T-1}^{\top}\Pi_{T}^{1/2}\right\}^{-1}\Pi_{T}^{1/2}A_{T-1} \succeq 0. \nonumber %\label{eq:Woodbury matrix identity}
    \end{align}
    In addition, if $A_{T-1}$ is invertible, $\Pi_{T-1}$ is also invertible.
    By applying this procedure recursively, we obtain the desired result.
\end{proof}

Now, we derive the optimal policy for a fixed prior as follows.

\begin{thm}
    Consider a given prior $\rho \in \mathcal{R}, \rho_{k}(\cdot) = \mathcal{N}(\mu_{\rho_{k}},\Sigma_{\rho_{k}})$.
    Assume that $A_{k}$ is invertible for all $k \in \llbracket 0 ,T-1 \rrbracket$.
    Then, the unique optimal policy $\pi^{\rho}$ of Problem \ref{prob:MIOCP without terminal constraint} with the prior fixed to the given $\rho \in \mathcal{R}$ is given by
    \begin{align}
        \pi_{k}^{\rho}(u | x) \propto \rho_{k}(u) \mathcal{N}(u | \mu_{Q_{k}}, \Sigma_{Q_{k}}), k \in \llbracket 0, T-1 \rrbracket, \label{eq:optimal policy for fixed prior}
    \end{align}
    where
    \begin{align}
        r_{k} = & A_{k}^{-1} r_{k+1} - \varepsilon \Pi_{k}^{-1} A_{k}^{\top} \Pi_{k+1} B_{k} \nonumber \\
        & \times \left\{\Sigma_{\rho_{k}}(R_{k} + B_{k}^{\top} \Pi_{k+1} B_{k}) + \varepsilon I\right\}^{-1}\mu_{\rho_{k}},\label{eq:residual in mean of Q}\\
        &k \in \llbracket 0, T-1 \rrbracket,\nonumber \\
        r_{T} = & \mu_{x_{\text{fin}}}, \label{eq:residual for k=T in mean of Q} \\
        \mu_{Q_{k}} := & -(R_{k} + B_{k}^{\top} \Pi_{k+1} B_{k})^{-1} B_{k}^{\top} \Pi_{k+1} A_{k}\nonumber \\
        &\times(x- A_{k}^{-1}r_{k+1}),\label{eq:mean of Q}\\
        \Sigma_{Q_{k}} := & \varepsilon (R_{k} + B_{k}^{\top} \Pi_{k+1} B_{k})^{-1}. \label{eq:covariance matrix of Q}
    \end{align}
    \qedtheorem
    \label{thm:optimal policy for fixed prior}
\end{thm}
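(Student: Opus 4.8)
The plan is to solve Problem \ref{prob:MIOCP without terminal constraint} with $\rho$ fixed by backward dynamic programming, showing by downward induction on $k$ that the optimal cost-to-go is quadratic, $V_k(x) = \tfrac12\|x-r_k\|_{\Pi_k}^2 + c_k$ for a state-independent constant $c_k$, with $\Pi_k$ and $r_k$ as in the statement, and that the stagewise minimizer is the Gibbs policy \eqref{eq:optimal policy for fixed prior}. The base case is immediate: $V_T(x)=\tfrac12\|x-\mu_{x_{\mathrm{fin}}}\|_F^2$, so $\Pi_T=F$, $r_T=\mu_{x_{\mathrm{fin}}}$, $c_T=0$, matching \eqref{eq:terminal condition of Riccati difference equation} and \eqref{eq:residual for k=T in mean of Q}.

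For the inductive step, assume $V_{k+1}(x)=\tfrac12\|x-r_{k+1}\|_{\Pi_{k+1}}^2+c_{k+1}$. Averaging over $w_k\sim\mathcal{N}(0,\Sigma_{w_k})$ turns $\mathbb{E}_{w_k}[V_{k+1}(A_kx+B_ku+w_k)]$ into $\tfrac12\|A_kx+B_ku-r_{k+1}\|_{\Pi_{k+1}}^2$ plus a $u$-free term, so the effective one-step cost $E_k(u;x):=\tfrac12\|u\|_{R_k}^2+\tfrac12\|A_kx+B_ku-r_{k+1}\|_{\Pi_{k+1}}^2$ is a strictly convex quadratic in $u$ (here $R_k\succ0$ and $\Pi_{k+1}\succeq0$ by Lemma \ref{lem:positive semidefiniteness of Pi}). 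Completing the square in $u$, and using the identity $A_kx-r_{k+1}=A_k(x-A_k^{-1}r_{k+1})$, gives $e^{-E_k(u;x)/\varepsilon}\propto\mathcal{N}(u\mid\mu_{Q_k},\Sigma_{Q_k})$ with $\mu_{Q_k},\Sigma_{Q_k}$ exactly as in \eqref{eq:mean of Q}, \eqref{eq:covariance matrix of Q}. The Bellman step reads $V_k(x)=\min_{\pi_k(\cdot\mid x)}\{\mathbb{E}_u[E_k(u;x)]+\varepsilon\mathcal{D}_{\mathrm{KL}}[\pi_k(\cdot\mid x)\|\rho_k]\}+\text{const}$, and by the Gibbs variational principle (Donsker--Varadhan) the unique minimizer over \emph{all} conditional distributions is $\pi_k^\rho(u\mid x)\propto\rho_k(u)e^{-E_k(u;x)/\varepsilon}$, which is \eqref{eq:optimal policy for fixed prior}; strict convexity of the KL term in its first argument gives uniqueness at each $x$, and the dynamic programming principle lifts this to global uniqueness of $\pi^\rho$. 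Since $\pi_k^\rho$ is a product of two Gaussians it lies in $\mathcal{P}$ (state-independent covariance $(\Sigma_{\rho_k}^{-1}+\Sigma_{Q_k}^{-1})^{-1}$, mean affine in $x$ through $\mu_{Q_k}$), which also confirms the earlier claim that $\mathcal{P}$ contains the optimal policy.

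It remains to evaluate $V_k(x)=-\varepsilon\log\int\rho_k(u)e^{-E_k(u;x)/\varepsilon}\,du+\text{const}$ and match it to the inductive hypothesis. Writing $E_k(u;x)=\tfrac12(u-\mu_{Q_k})^\top(R_k+B_k^\top\Pi_{k+1}B_k)(u-\mu_{Q_k})+E_k^{\min}(x)$ and using the Gaussian product formula $\int\mathcal{N}(u\mid\mu_{\rho_k},\Sigma_{\rho_k})\mathcal{N}(u\mid\mu_{Q_k},\Sigma_{Q_k})\,du=\mathcal{N}(\mu_{\rho_k}\mid\mu_{Q_k},\Sigma_{\rho_k}+\Sigma_{Q_k})$ yields $V_k(x)=E_k^{\min}(x)+\tfrac{\varepsilon}{2}\|\mu_{Q_k}-\mu_{\rho_k}\|_{(\Sigma_{\rho_k}+\Sigma_{Q_k})^{-1}}^2+\text{const}$, which is quadratic in $x$ because $\mu_{Q_k}$ is affine in $x$. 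Setting $M:=R_k+B_k^\top\Pi_{k+1}B_k$ and collecting the quadratic part, the Hessian equals $A_k^\top\big[\Pi_{k+1}-\Pi_{k+1}B_k\big(M^{-1}-\varepsilon M^{-1}(\Sigma_{\rho_k}+\varepsilon M^{-1})^{-1}M^{-1}\big)B_k^\top\Pi_{k+1}\big]A_k$, and the Woodbury identity $M^{-1}-\varepsilon M^{-1}(\Sigma_{\rho_k}+\varepsilon M^{-1})^{-1}M^{-1}=(M+\varepsilon\Sigma_{\rho_k}^{-1})^{-1}$ collapses this to the right-hand side of \eqref{eq:Riccati difference equation}, so the Hessian is $\Pi_k$. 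Completing the square in $x$ then forces the center to satisfy $\Pi_k(r_k-A_k^{-1}r_{k+1})=-\varepsilon A_k^\top\Pi_{k+1}B_k M^{-1}(\Sigma_{\rho_k}+\varepsilon M^{-1})^{-1}\mu_{\rho_k}$; invertibility of $\Pi_k$, which follows from Lemma \ref{lem:positive semidefiniteness of Pi} under the standing assumption that $A_k$ is invertible, lets us solve for $r_k$, and the identity $M^{-1}(\Sigma_{\rho_k}+\varepsilon M^{-1})^{-1}=(\Sigma_{\rho_k}M+\varepsilon I)^{-1}$ matches it to \eqref{eq:residual in mean of Q}. This closes the induction.

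\textbf{Main obstacle.} The crux is the algebra in the last step: one must cleanly separate the $u$-dependent and $x$-dependent parts of the effective cost, carry the affine dependence of $\mu_{Q_k}$ on $x$ through two successive completions of the square (first in $u$, then in $x$), and recognize the two matrix identities that reduce the resulting expressions precisely to the Riccati recursion \eqref{eq:Riccati difference equation} and the center recursion \eqref{eq:residual in mean of Q}. The invertibility of $A_k$ must be invoked explicitly at two points — to factor $A_kx-r_{k+1}=A_k(x-A_k^{-1}r_{k+1})$ and, via Lemma \ref{lem:positive semidefiniteness of Pi}, to make $\Pi_k^{-1}$ available when solving for $r_k$.
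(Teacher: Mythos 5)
Your proposal is correct and follows essentially the same route as the paper's proof: backward dynamic programming, the Gibbs/KL variational identity to characterize the stagewise minimizer as $\pi_k^\rho(u\mid x)\propto\rho_k(u)e^{-Q_k(x,u)/\varepsilon}$, completion of the square in $u$ to identify $\mathcal{N}(\mu_{Q_k},\Sigma_{Q_k})$, the Gaussian product integral for $-\varepsilon\log z_k$, and a Woodbury collapse to recover the Riccati recursion and the $r_k$ update. You are somewhat more explicit than the paper about the Woodbury step, the identity $M^{-1}(\Sigma_{\rho_k}+\varepsilon M^{-1})^{-1}=(\Sigma_{\rho_k}M+\varepsilon I)^{-1}$, and the uniqueness argument, but these are presentational rather than substantive differences.
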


See Appendix I for the proof.
Note that the assumption of Theorem \ref{thm:optimal policy for fixed prior} is not restrictive.
In many situations, $A_{k}$ would be invertible.
For example, if \eqref{eq:linear system} is derived by a zero-order hold discretization of a continuous-time system, $A_{k}$ is always invertible.
From Theorem \ref{thm:optimal policy for fixed prior}, we have the following corollary.
\begin{cor}
    Consider a given prior $\rho \in \mathcal{R}, \rho_{k}(\cdot) = \mathcal{N}(\mu_{\rho_{k}},\Sigma_{\rho_{k}})$.
    Suppose that the same assumptions as Theorem \ref{thm:optimal policy for fixed prior} hold.
    Then, the policy $\pi^{\rho}$ given by \eqref{eq:optimal policy for fixed prior} satisfies that $\pi^{\rho} \in \mathcal{P}$. 
    
    \qedtheorem
    \label{cor:optimal policy for fixed prior is in P}
    
\end{cor}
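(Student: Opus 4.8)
The plan is to recognize that, for each $k \in \llbracket 0, T-1 \rrbracket$, the right-hand side of \eqref{eq:optimal policy for fixed prior} is, up to a normalizing constant, the product of two Gaussian densities in the variable $u$, and to invoke the standard Gaussian-conjugacy fact that such a product is again proportional to a Gaussian density with parameters available in closed form. Concretely, $\pi_k^{\rho}(\cdot\mid x)$ is proportional to $\mathcal{N}(u\mid\mu_{\rho_k},\Sigma_{\rho_k})\,\mathcal{N}(u\mid\mu_{Q_k},\Sigma_{Q_k})$, and completing the square in $u$ yields
\begin{align*}
    \pi_k^{\rho}(\cdot\mid x) = \mathcal{N}\!\left(\Sigma_{\pi_k}\bigl(\Sigma_{\rho_k}^{-1}\mu_{\rho_k} + \Sigma_{Q_k}^{-1}\mu_{Q_k}\bigr),\, \Sigma_{\pi_k}\right), \qquad \Sigma_{\pi_k} := \bigl(\Sigma_{\rho_k}^{-1} + \Sigma_{Q_k}^{-1}\bigr)^{-1}.
\end{align*}

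First I would check that the two factors are genuine (nondegenerate) Gaussians. This is immediate for $\rho_k$ since $\Sigma_{\rho_k}\succ 0$ by definition of $\mathcal{R}$. For the second factor, I would argue that $\Sigma_{Q_k}$ in \eqref{eq:covariance matrix of Q} is well defined and positive definite: by Lemma \ref{lem:positive semidefiniteness of Pi} we have $\Pi_{k+1}\succeq 0$, and since $R_k\succ 0$ this gives $R_k + B_k^{\top}\Pi_{k+1}B_k \succ 0$, hence it is invertible and $\Sigma_{Q_k} = \varepsilon(R_k + B_k^{\top}\Pi_{k+1}B_k)^{-1}\succ 0$. Consequently $\Sigma_{\rho_k}^{-1} + \Sigma_{Q_k}^{-1}\succ 0$, so $\Sigma_{\pi_k}$ is a well-defined positive definite matrix, and—this is the point for membership in $\mathcal{P}$—it does not depend on $x$.

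Next I would extract the affine-in-$x$ structure of the mean. From \eqref{eq:mean of Q}, $\mu_{Q_k}$ is an affine function of $x$, say $\mu_{Q_k} = -L_k x + \ell_k$ with $L_k := (R_k + B_k^{\top}\Pi_{k+1}B_k)^{-1}B_k^{\top}\Pi_{k+1}A_k$ and $\ell_k := L_k A_k^{-1} r_{k+1}$ (here $A_k^{-1}$ exists by the standing assumption), while $\mu_{\rho_k}$ is constant in $x$. Substituting into the mean above shows that the mean of $\pi_k^{\rho}(\cdot\mid x)$ equals $P_k x + q_k$ with
\begin{align*}
    P_k := -\Sigma_{\pi_k}\Sigma_{Q_k}^{-1}L_k \in \mathbb{R}^{m\times n}, \qquad q_k := \Sigma_{\pi_k}\bigl(\Sigma_{\rho_k}^{-1}\mu_{\rho_k} + \Sigma_{Q_k}^{-1}\ell_k\bigr) \in \mathbb{R}^{m}.
\end{align*}
Thus $\pi_k^{\rho}(\cdot\mid x) = \mathcal{N}(P_k x + q_k, \Sigma_{\pi_k})$ with $\Sigma_{\pi_k}\succ 0$, which is exactly the form required by $\mathcal{P}$, so $\pi^{\rho}\in\mathcal{P}$.

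I expect no real obstacle here: the argument is the routine Gaussian product/conjugacy calculation, and the only care needed is the positive-definiteness bookkeeping—invoking Lemma \ref{lem:positive semidefiniteness of Pi} so that $R_k + B_k^{\top}\Pi_{k+1}B_k\succ 0$, which is what makes $\Sigma_{Q_k}$ and $\Sigma_{\pi_k}$ legitimate covariance matrices rather than merely symmetric ones. (One could also derive $P_k$, $q_k$, $\Sigma_{\pi_k}$ directly from the expressions for $\mu_{Q_k}$, $\Sigma_{Q_k}$ produced in the proof of Theorem \ref{thm:optimal policy for fixed prior}, if those intermediate forms are preferred.)
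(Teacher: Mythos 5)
Your proof is correct and follows essentially the same route as the paper: both reduce the claim to the standard fact that the product of two Gaussian densities in $u$ is proportional to a Gaussian whose covariance is $x$-independent and whose mean inherits the affine-in-$x$ structure of $\mu_{Q_k}$. Your precision-form parametrization $\Sigma_{\pi_k}=(\Sigma_{\rho_k}^{-1}+\Sigma_{Q_k}^{-1})^{-1}$ is algebraically identical to the paper's $\Sigma_{\rho_k}(\Sigma_{\rho_k}+\Sigma_{Q_k})^{-1}\Sigma_{Q_k}$, and your explicit check that $\Sigma_{Q_k}\succ 0$ via Lemma \ref{lem:positive semidefiniteness of Pi} is a small but welcome piece of bookkeeping the paper leaves implicit.
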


\begin{proof}
    From a straightforward calculation, a distribution that is proportional to the product of two Gaussian distributions $\mathcal{N}(\mu_{1}, \Sigma_{1})$ and $\mathcal{N}(\mu_{2}, \Sigma_{2})$, where $\mu_{1}, \mu_{2} \in \mathbb{R}^{m}$ and $ \Sigma_{1} , \Sigma_{2} \succ 0$, is given by
    \begin{align*}
        \mathcal{N}(&\Sigma_{2}(\Sigma_{1} + \Sigma_{2})^{-1}\mu_{1} + \Sigma_{1}(\Sigma_{1} + \Sigma_{2})^{-1}\mu_{2},\\
        &\Sigma_{1}(\Sigma_{1} + \Sigma_{2})^{-1}\Sigma_{2}).
    \end{align*}
    By using this formula, the policy \eqref{eq:optimal policy for fixed prior} satisfies 
    \begin{align*}
        \pi_{k}^{\rho}(\cdot | x) = \mathcal{N}(\mu_{\pi_{k}^{\rho}}, \Sigma_{\pi_{k}^{\rho}}),
    \end{align*}
    where
    \begin{align}
        \mu_{\pi_{k}^{\rho}} =& \Sigma_{Q_{k}}(\Sigma_{\rho_{k}} + \Sigma_{Q_{k}})^{-1}\mu_{\rho_{k}} + \Sigma_{\rho_{k}}(\Sigma_{\rho_{k}} + \Sigma_{Q_{k}})^{-1}\mu_{Q_{k}}, \label{eq:mean of optimal policy for fixed prior}\\
        \Sigma_{\pi_{k}^{\rho}} =& \Sigma_{\rho_{k}} (\Sigma_{\rho_{k}} + \Sigma_{Q_{k}})^{-1}\Sigma_{Q_{k}}. \label{eq:covariance matrix of optimal policy for fixed prior}
    \end{align}
    By substituting $\eqref{eq:mean of Q}$ into \eqref{eq:mean of optimal policy for fixed prior}, $\mu_{\pi_{k}^{\rho}}$ turns out to be an affine transformation of $x$.
    In addition, from \eqref{eq:covariance matrix of optimal policy for fixed prior}, $\Sigma_{\pi_{k}^{\rho}}$ does not depend on $x$ because both $\Sigma_{\rho_{k}}$ and $\Sigma_{Q_{k}}$ do not depend on $x$.
    Therefore, $\pi^{\rho} \in \mathcal{P}$ holds for $\rho \in \mathcal{R}$, which completes the proof.
\end{proof}

Theorem \ref{thm:optimal policy for fixed prior} and Corollary \ref{cor:optimal policy for fixed prior is in P} are the reasons why we choose the policy class as $\mathcal{P}$ for the prior class $\mathcal{R}$.

\begin{rem}
    Problem \ref{prob:MIOCP without terminal constraint} with $\rho \in \mathcal{R}$ fixed can be regarded as an extension of a maximum entropy optimal control problem (MEOCP) \cite[Section II]{ito2023maximum}.
    Let us substitute the uniform distribution $p^{\text{uni}}(\cdot)$ on $\mathbb{R}^{m}$ for $\rho_{k}$, which is called an improper prior \cite{kass1996selection} and formally given by $p^{\text{uni}}(u) \propto 1$.
    Then, the KL divergence term in \eqref{eq:objective function of MIOCP without terminal constraint} can be formally rewritten as
    \begin{align*}
        \mathcal{D}_{\text{KL}}[\pi_{k}(\cdot|x)\|p^{\text{uni}}(\cdot)]=-\mathcal{H}(\pi_{k}(\cdot|x)) + (\text{constant}),
    \end{align*}
    where $\mathcal{H}(\pi_{k}(\cdot|x))=-\int_{\mathbb{R}^{m}}\pi_{k}(u|x)\log \pi_{k}(u|x)du$ is the entropy of $\pi_{k}(\cdot|x)$.
    On the basis of this observation, Problem \ref{prob:MIOCP without terminal constraint} with $\rho_{k}(\cdot)$ fixed to the uniform distribution can be regarded as an MEOCP.
    Here, let us show that Theorem \ref{thm:optimal policy for fixed prior} and Corollary \ref{cor:optimal policy for fixed prior is in P} are consistent with the result of \cite{ito2023maximum}.
    To this end, we consider the limit where $\rho_{k}(\cdot)$ converges to the uniform distribution, that is, $\Sigma_{\rho_{k}}=\lambda I$ as $\lambda \rightarrow \infty$.
    As $\lambda \rightarrow \infty$, the solution $\Pi_{k}$ of the Riccati equation \eqref{eq:Riccati difference equation} and \eqref{eq:terminal condition of Riccati difference equation} converges to that of the following Riccati equation.
    \begin{align}
        \hat{\Pi}_{k} = & A_{k}^{\top} \hat{\Pi}_{k+1} A_{k} -A_{k}^{\top} \hat{\Pi}_{k+1} B_{k} \nonumber \\
        &\times (R_{k} + B_{k}^{\top} \hat{\Pi}_{k+1} B_{k})^{-1} B_{k}^{\top} \hat{\Pi}_{k+1} A_{k},k \in \llbracket 0, T-1\rrbracket ,\nonumber\\
        \hat{\Pi}_{T} = & F.\nonumber
    \end{align}
    In addition, the solution $r_{k}$ to \eqref{eq:residual in mean of Q} and \eqref{eq:residual for k=T in mean of Q} converges to
    \begin{align}
        \hat{r}_{k} =& A_{k}^{-1}\hat{r}_{k+1},\hat{r}_{T} = \mu_{x_{\text{fin}}}. \nonumber
    \end{align}
    Furthermore, from \eqref{eq:mean of optimal policy for fixed prior} and \eqref{eq:covariance matrix of optimal policy for fixed prior}, it follows that
    \begin{align*}
       \lim_{\lambda \rightarrow \infty}\mu_{\pi_{k}^{\rho}}= \hat{\mu}_{Q_{k}},\lim_{\lambda\rightarrow \infty}\Sigma_{\pi_{k}^{\rho}}= \hat{\Sigma}_{Q_{k}},
    \end{align*}
    where $\hat{\mu}_{Q_{k}}$ and $\hat{\Sigma}_{Q_{k}}$ are given by substituting $\Pi_{k} = \hat{\Pi}_{k}$ and $r_{k}=\hat{r}_{k}$ into \eqref{eq:mean of Q} and \eqref{eq:covariance matrix of Q}, respectively.
    This observation coincides with \cite[Proposition 1]{ito2023maximum}, where the unique optimal solution to the MEOCP associated with Problem \ref{prob:MIOCP without terminal constraint} is given by $\mathcal{N}(\hat{\mu}_{Q_{k}},\hat{\Sigma}_{Q_{k}})$.
    \qedtheorem
    \label{rem:aspect of generalization of maximum entropy optimal density control}
\end{rem}

\subsection{The Optimal Prior for a Fixed Policy}\label{subsec:The Optimal Prior for a Fixed Policy}

Let us denote the mean and covariance matrix of the state $x_{k}$ by $\mu_{x_{k}}$ and $\Sigma_{x_{k}}$, respectively.
From \eqref{eq:linear system}--\eqref{eq:initial condition}, $\mu_{x_{k}}$ and $\Sigma_{x_{k}}$ evolve as follows under $\pi\in \mathcal{P}, \pi_{k}(\cdot|x)=\mathcal{N}(P_{k}x + q_{k},\Sigma_{\pi_{k}})$.
\begin{align}
    &\mu_{x_{k+1}} = (A_{k} + B_{k}P_{k})\mu_{x_{k}} + B_{k}q_{k}, k \in \llbracket 0, T-1 \rrbracket ,\nonumber \\
    &\mu_{x_{0}} = \mu_{x_{\text{ini}}}, \nonumber \\
    &\Sigma_{x_{k+1}} = (A_{k} + B_{k}P_{k})\Sigma_{x_{k}} (A_{k} + B_{k}P_{k})^{\top} + B_{k} \Sigma_{\pi_{k}} B_{k}^{\top}\nonumber\\
    &\hspace{35pt} +\Sigma_{w_{k}},k \in \llbracket 0, T-1 \rrbracket, \label{eq:evolution of covariance matrix of state}\\
    &\Sigma_{x_{0}} = \Sigma_{x_{\text{ini}}}.\nonumber
\end{align}
Then, the optimal prior for a fixed $\pi \in \mathcal{P}$ is given by the following theorem.

\begin{thm}
    Consider a given policy $\pi\in \mathcal{P},\pi_{k}(\cdot|x)=\mathcal{N}(P_{k}x+q_{k},\Sigma_{\pi_{k}})$.
    Then, the unique optimal prior $\rho^{\pi}$ of Problem \ref{prob:MIOCP without terminal constraint} with the policy fixed to the given $\pi \in \mathcal{P}$ is given by
    \begin{align}
        &\rho_{k}^{\pi}(u) = \mathcal{N}(P_{k} \mu_{x_{k}} + q_{k}, \Sigma_{\pi_{k}} + P_{k} \Sigma_{x_{k}} P_{k}^{\top}),k \in \llbracket 0, T-1 \rrbracket. \nonumber
    \end{align}
    \qedtheorem
    \label{thm:optimal prior for fixed policy}
\end{thm}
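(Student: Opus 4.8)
The plan is to exploit the fact that, with $\pi$ fixed, the prior enters the objective \eqref{eq:objective function of MIOCP without terminal constraint} only through the KL terms $\varepsilon\sum_{k=0}^{T-1}\mathbb{E}\big[\mathcal{D}_{\text{KL}}[\pi_{k}(\cdot|x_{k})\|\rho_{k}(\cdot)]\big]$, and that the law of $x_{k}$ — hence the joint law of $(x_{k},u_{k})$ — is completely determined by $\pi$ via \eqref{eq:evolution of covariance matrix of state} together with the accompanying mean recursion, with no dependence on $\rho$. Consequently the minimization over $\rho=\{\rho_{k}\}_{k=0}^{T-1}$ decouples into $T$ independent problems $\min_{\rho_{k}\in\mathcal{R}}\mathbb{E}_{x_{k}}\big[\mathcal{D}_{\text{KL}}[\pi_{k}(\cdot|x_{k})\|\rho_{k}(\cdot)]\big]$, one for each $k\in\llbracket 0,T-1\rrbracket$.

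For each $k$ I would invoke the standard decomposition (the same one used in Remark \ref{rem:origin of mutual information optimal density control})
\begin{align*}
    \mathbb{E}_{x_{k}}\left[\mathcal{D}_{\text{KL}}[\pi_{k}(\cdot|x_{k})\|\rho_{k}(\cdot)]\right] = \mathcal{I}(x_{k},u_{k}) + \mathcal{D}_{\text{KL}}[p(u_{k})\|\rho_{k}(\cdot)],
\end{align*}
where $p(u_{k})$ denotes the marginal law of $u_{k}$ under the joint distribution induced by the law of $x_{k}$ and $\pi_{k}$. The first term is independent of $\rho_{k}$, while the second is nonnegative and, by the equality case of Gibbs' inequality, vanishes if and only if $\rho_{k}=p(u_{k})$. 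This yields simultaneously the optimality of $\rho_{k}=p(u_{k})$ and the uniqueness of the minimizer among \emph{all} probability distributions on $\mathbb{R}^{m}$, in particular among those in $\mathcal{R}$. It then only remains to identify $p(u_{k})$ explicitly.

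Since the dynamics \eqref{eq:linear system} are linear, the process noise and the initial state are Gaussian, and $\pi_{k}(\cdot|x)=\mathcal{N}(P_{k}x+q_{k},\Sigma_{\pi_{k}})$ is affine-Gaussian in $x$, a routine induction shows that $x_{k}$ is Gaussian and that $(x_{k},u_{k})$ is jointly Gaussian; hence $p(u_{k})$ is Gaussian. Writing $u_{k}=P_{k}x_{k}+q_{k}+v_{k}$ with $v_{k}\sim\mathcal{N}(0,\Sigma_{\pi_{k}})$ independent of $x_{k}$ and reading off the first two moments gives $p(u_{k})=\mathcal{N}(P_{k}\mu_{x_{k}}+q_{k},\ \Sigma_{\pi_{k}}+P_{k}\Sigma_{x_{k}}P_{k}^{\top})$. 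Because $\Sigma_{\pi_{k}}\succ 0$ and $P_{k}\Sigma_{x_{k}}P_{k}^{\top}\succeq 0$, this covariance is positive definite, so $p(u_{k})\in\mathcal{R}$; thus the unconstrained optimal prior already lies in $\mathcal{R}$ and coincides with the claimed $\rho_{k}^{\pi}$.

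I do not anticipate a real obstacle; the only points needing care are (i) the finiteness of $\mathbb{E}_{x_{k}}\big[\mathcal{D}_{\text{KL}}[\pi_{k}(\cdot|x_{k})\|\rho_{k}(\cdot)]\big]$, which holds because this KL divergence between two Gaussians is quadratic in $x_{k}$ and $x_{k}$ has finite second moment, so that the decomposition identity above is legitimate; and (ii) confirming that the equality condition in Gibbs' inequality gives uniqueness within $\mathcal{R}$, which is immediate once the global minimizer $p(u_{k})$ is itself an element of $\mathcal{R}$.
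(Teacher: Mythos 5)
Your proposal is correct, and it reaches the same conclusion as the paper --- that the optimal prior is the marginal law of $u_{k}$ under $\pi$ --- but by a genuinely different route. The paper introduces a Lagrange multiplier for the normalization constraint $\int\rho_{k}(u)\,du=1$ and applies a calculus-of-variations argument to obtain the stationarity condition $\int\mathcal{N}(x\mid\mu_{x_{k}},\Sigma_{x_{k}})\pi_{k}(u|x)\,dx=\lambda\,\rho_{k}^{\pi}(u)$, from which the marginal emerges after normalization. You instead use the exact identity $\mathbb{E}_{x_{k}}\bigl[\mathcal{D}_{\text{KL}}[\pi_{k}(\cdot|x_{k})\|\rho_{k}]\bigr]=\mathcal{I}(x_{k},u_{k})+\mathcal{D}_{\text{KL}}[p(u_{k})\|\rho_{k}]$ and conclude by Gibbs' inequality. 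Your decomposition buys a cleaner and arguably more complete treatment of \emph{uniqueness}: the variational argument in the paper only establishes a first-order condition, whereas the nonnegativity of $\mathcal{D}_{\text{KL}}[p(u_{k})\|\rho_{k}]$ with equality iff $\rho_{k}=p(u_{k})$ gives global optimality and uniqueness among all priors (not just within $\mathcal{R}$) in one stroke; it also makes the connection to the mutual-information reformulation of Remark \ref{rem:derivation of MI from KL} explicit rather than a separate observation. The paper's variational route, for its part, requires no prior knowledge of the decomposition identity. The final step --- identifying the marginal of $u_{k}=P_{k}x_{k}+q_{k}+v_{k}$ as $\mathcal{N}(P_{k}\mu_{x_{k}}+q_{k},\,\Sigma_{\pi_{k}}+P_{k}\Sigma_{x_{k}}P_{k}^{\top})$ --- is identical in both arguments, and your remarks on finiteness of the expected KL divergence and on $p(u_{k})\in\mathcal{R}$ are correct and appropriately careful.
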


\begin{proof}
        Since $\pi$ is fixed, we have
    \begin{align*}
        &\min_{\rho} \mathbb{E}\left[ \sum_{k=0}^{T-1} \left\{\frac{1}{2}\|u_{k}\|_{R_{k}}^{2} + \varepsilon \mathcal{D}_{\text{KL}}[\pi_{k}(\cdot|x_{k}) \| \rho_{k}(\cdot)] \right\}\right.\nonumber \\
        &\left. \hspace{33pt}+ \frac{1}{2}\|x_{T}-\mu_{x_{\text{fin}}}\|_{F}^{2} \right]\\
        \Leftrightarrow & \min_{\rho_{k}} \mathbb{E}\left[\mathcal{D}_{\text{KL}}[\pi_{k}(\cdot|x_{k}) \| \rho_{k}(\cdot)]\right], k \in \llbracket 0, T-1 \rrbracket.
    \end{align*}
    Let us consider the normalization condition $\int_{\mathbb{R}^{m}} \rho_{k}(u) du = 1$ and introduce the Lagrangian multiplier $\lambda \in \mathbb{R}$.
    Then, the Lagrangian of the above problem is given by
    \begin{align*}
        &\mathbb{E}\left[\mathcal{D}_{\text{KL}}[\pi_{k}(\cdot|x_{k}) \| \rho_{k}(\cdot)]\right] + \lambda \left(\int_{\mathbb{R}^{m}} \rho_{k}(u) du - 1 \right)\\
        =&\int_{\mathbb{R}^{n}} \mathcal{N}(x\mid \mu_{x_{k}}, \Sigma_{x_{k}})\left[\int_{\mathbb{R}^{m}} \left\{ \pi_{k}(u | x) \log{\frac{\pi_{k}(u|x)}{\rho_{k}(u)}}  \right.   \right. \\
        &\left. \left. + \lambda \rho_{k}(u) \right\}du - \lambda \right] dx.
    \end{align*}
    By applying the variational method, the infinitesimal variation of the Lagrangian is given by
    \begin{align*}
        \int_{\mathbb{R}^{m}} \left\{ - \frac{\int_{\mathbb{R}^{n}} \mathcal{N}(x\mid \mu_{x_{k}}, \Sigma_{x_{k}}) \pi_{k}(u|x) dx}{\rho_{k}(u)} + \lambda \right\}\delta \rho_{k}(u)du,
    \end{align*}
    where $\delta \rho_{k}(\cdot)$ is the infinitesimal variation of $\rho_{k}(\cdot)$.
    Therefore, the optimal prior $\rho^{\pi}$ of Problem \ref{prob:MIOCP without terminal constraint} with $\pi \in \mathcal{P}$ fixed satisfies
    \begin{align*}
        \frac{\int_{\mathbb{R}^{n}} \mathcal{N}(x\mid \mu_{x_{k}}, \Sigma_{x_{k}}) \pi_{k}(u|x) dx}{\rho_{k}^{\pi}(u)} = \lambda.
    \end{align*}
    It hence follows that
    \begin{align}
        \rho_{k}^{\pi}(u) =& \int_{\mathbb{R}^{n}} \mathcal{N}(x \mid \mu_{x_{k}}, \Sigma_{x_{k}}) \pi_{k}(u|x)dx \label{eq:optimal prior for fixed policy as marginalized distribution}\\
        =&\mathcal{N}(P_{k}\mu_{x_{k}}+q_{k}, \Sigma_{\pi_{k}} + P_{k} \Sigma_{x_{k}} P_{k}^{\top}),\nonumber
    \end{align}
    which completes the proof.
\end{proof}
Note that Theorem \ref{thm:optimal prior for fixed policy} implies that $\rho^{\pi} \in \mathcal{R}$ if $\pi \in \mathcal{P}$.

\begin{rem}
    Let us derive the mutual information term by substituting $\rho = \rho^{\pi}$ into the KL divergence term in \eqref{eq:objective function of MIOCP without terminal constraint} as mentioned in Remark \ref{rem:origin of mutual information optimal density control}.
    From \eqref{eq:optimal prior for fixed policy as marginalized distribution}, $\rho_{k}^{\pi}(\cdot)$ is the probability distribution of $u_{k}$ under $\pi$.
    Then, we have
    \begin{align*}
        &\mathbb{E}[\mathcal{D}_{\text{KL}}[\pi_{k}(\cdot|x_{k})|\rho_{k}^{\pi}(\cdot)]]\\
        =&\int_{\mathbb{R}^{n}}p(x_{k})\left\{\int_{\mathbb{R}^{m}}\pi_{k}(u_{k}|x_{k})\log\frac{\pi_{k}(u_{k}|x_{k})}{\rho_{k}^{\pi}(u_{k})}du_{k}\right\}dx_{k}\\
        =&\int_{\mathbb{R}^{n}}\int_{\mathbb{R}^{m}}p(x_{k},u_{k})\log\frac{\pi_{k}(u_{k}|x_{k})}{p(u_{k})}du_{k}dx_{k}\\
        =&\int_{\mathbb{R}^{n}}\int_{\mathbb{R}^{m}}p(x_{k},u_{k})\log\frac{p(x_{k},u_{k})}{p(x_{k})p(u_{k})}du_{k}dx_{k}=\mathcal{I}(x_{k},u_{k}).
    \end{align*}
    \qedtheorem
    \label{rem:derivation of MI from KL}
\end{rem}

\section{Alternating Minimization Algorithm and Numerical Examples}\label{sec:Alternating Minimization Algorithm and Numerical Examples}

\begin{figure}[htbp]
    \begin{center}
    \begin{tabular}{c}   
      \begin{minipage}[t]{0.5\hsize}
      \centerline{\includegraphics[width=62mm]{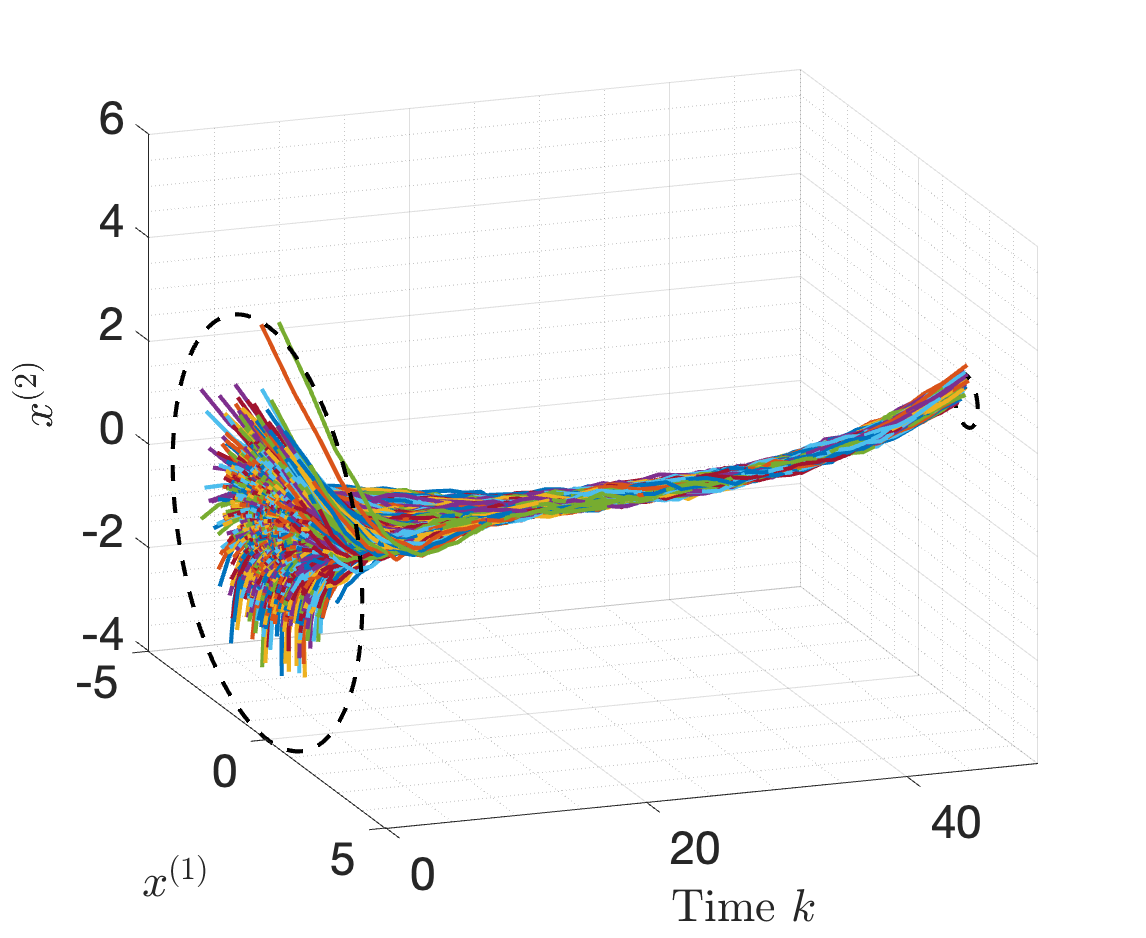}}
        \subcaption{$(i,\varepsilon) =  (10^{5},0.1)$}
        \label{fig:i1000000_eps0p1}
      \end{minipage}\\
      
      \begin{minipage}[t]{0.5\hsize}
        \centerline{\includegraphics[width=62mm]{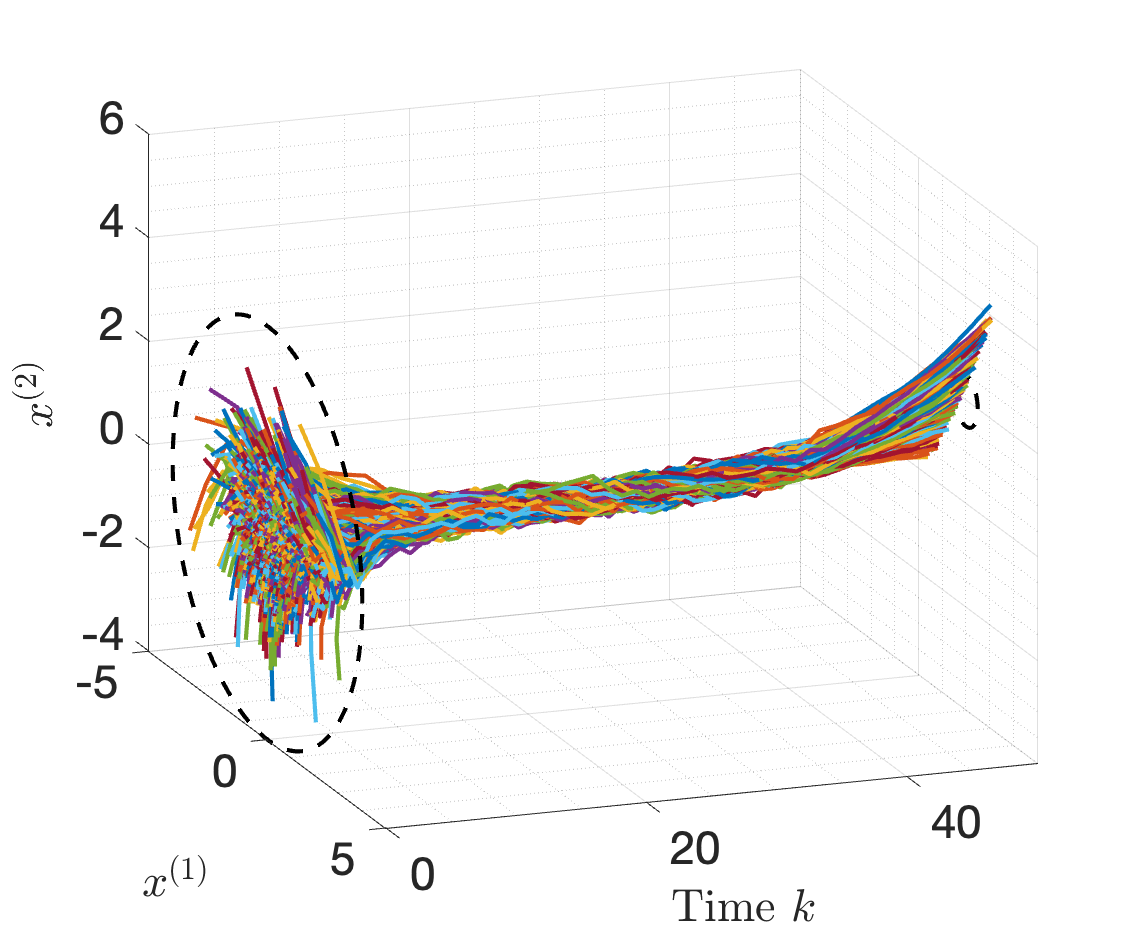}}
        \subcaption{$(i,\varepsilon) = (10^{5},4)$}
        \label{fig:i1000000_eps4}
      \end{minipage}\\

      \begin{minipage}[t]{0.5\hsize}
        \centerline{\includegraphics[width=62mm]{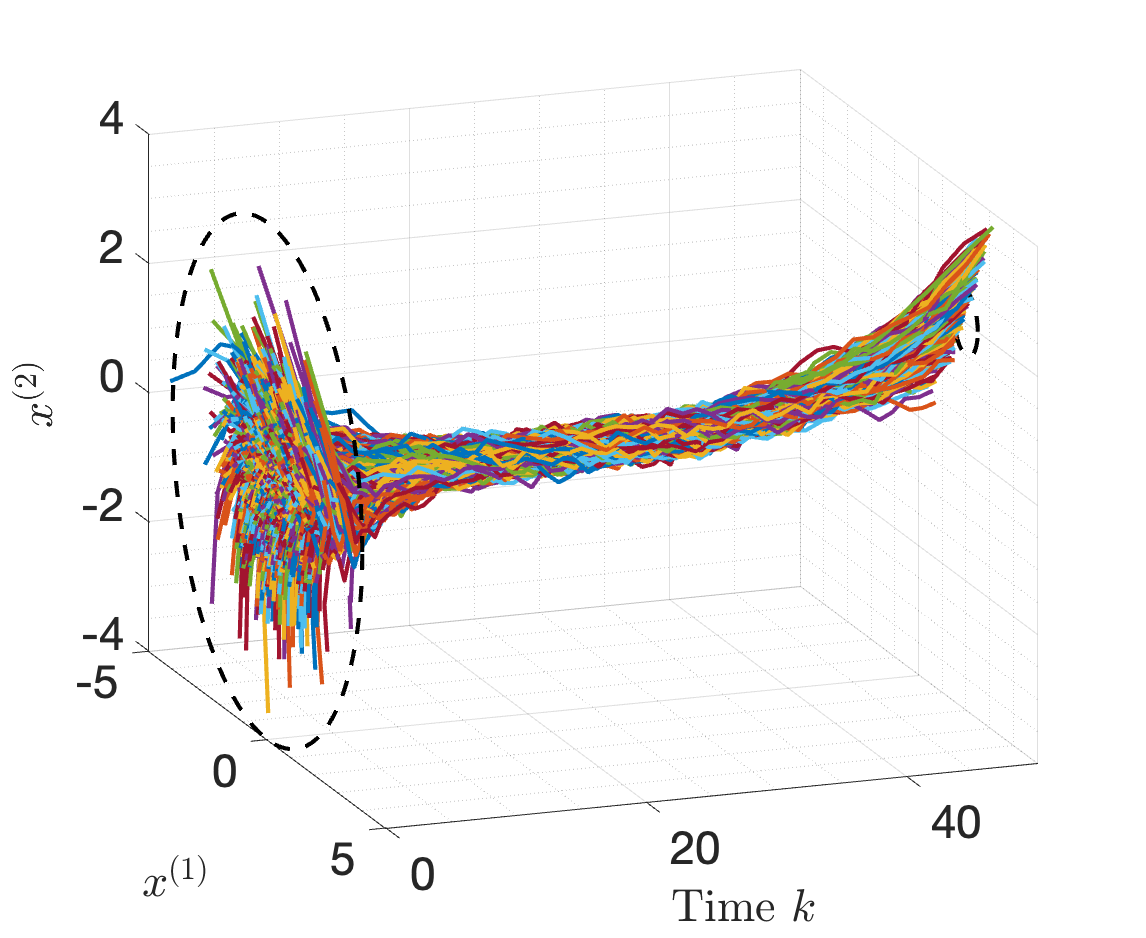}}
        \subcaption{$(i,\varepsilon) =  (20,4)$}
        \label{fig:i20_eps4}
      \end{minipage}
      
    \end{tabular}
    \caption{$1000$ samples of $x_{k}=[x_{k}^{(1)},x_{k}^{(2)}]^{\top}$ (colored lines) under $\pi^{(i)}$.
    The black dashed circles are used to visually highlight the distribution of the initial state $\mathcal{N}(\mu_{x_{\text{ini}}},\Sigma_{x_{\text{ini}}})$ and the terminal target state $\mu_{x_{\text{fin}}}$.}
    \label{fig:sample paths of proposed algorithm}
    \end{center}
\end{figure}

\begin{figure}[htbp]
    \begin{center}
    \centerline{\includegraphics[width=60mm]{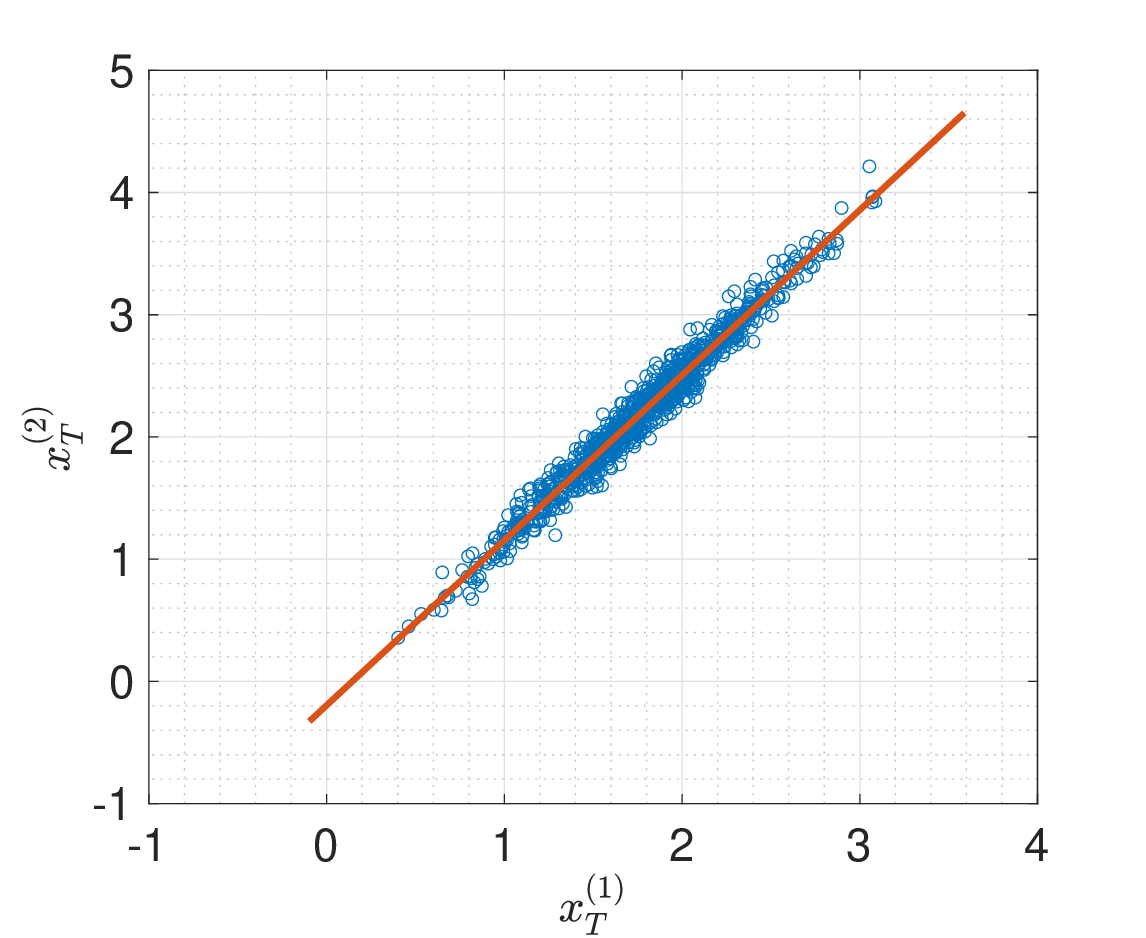}}
    \caption{The $1000$ samples of $x_{T}=[x_{T}^{(1)},x_{T}^{(2)}]^{\top}$ (blue circles) of Fig. \ref{fig:i1000000_eps4} and the linear regression (red line) of the samples given by \eqref{eq:linear regression of the terminal state}.}
    \label{fig:scatter of the terminal state samples}
    \end{center}
\end{figure}

\begin{figure}[htbp]
    \begin{center}
    \centerline{\includegraphics[width=60mm]{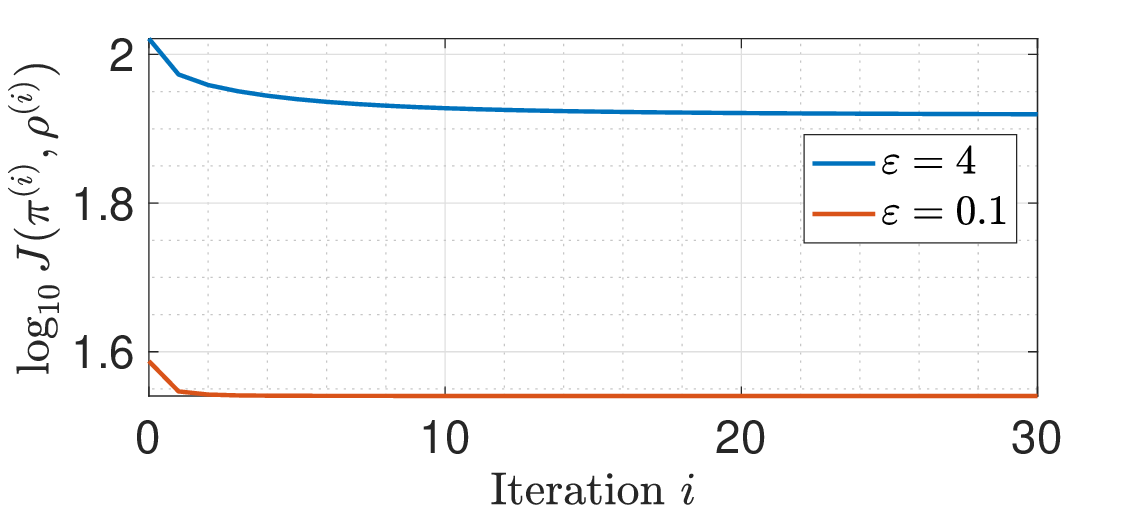}}
    \caption{The trajectories of the objective function $J(\pi^{(i)},\rho^{(i)})$ in semi-log scale.}
    \label{fig:objective function}
    \end{center}
\end{figure}

\begin{figure}[htbp]
    \begin{center}
    \centerline{\includegraphics[width=60mm]{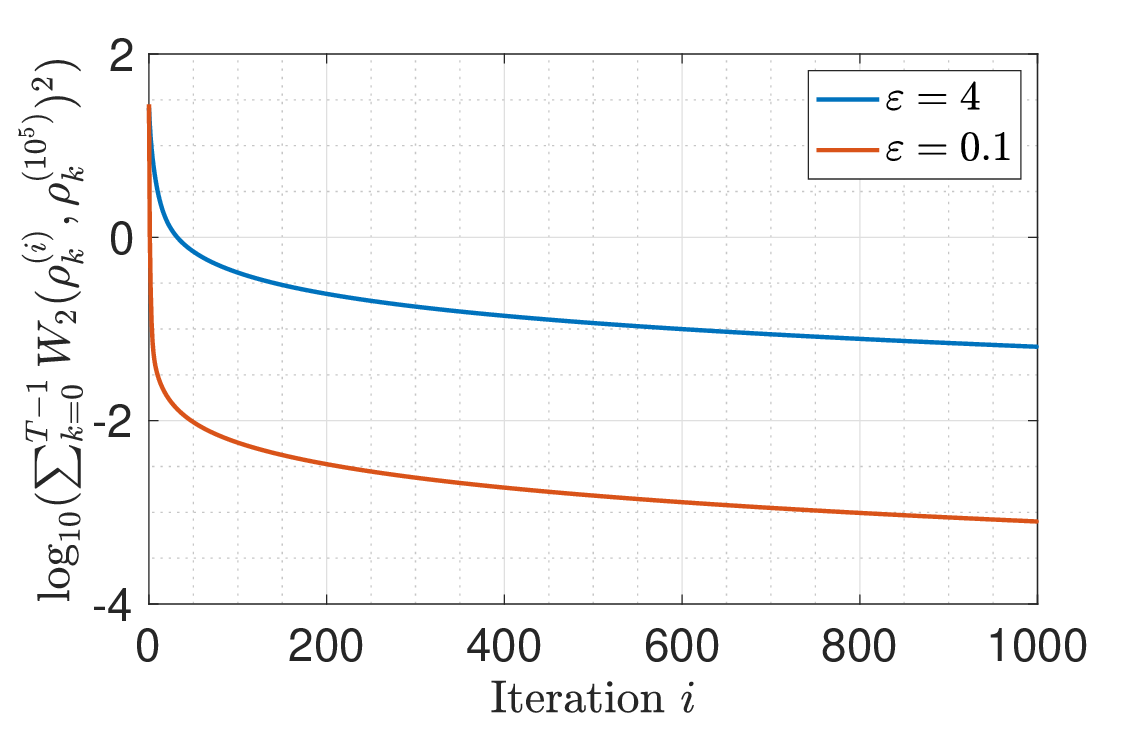}}
    \caption{The trajectories of the sum over time $k$ of the squared Wasserstein $2$-distance between $\rho_{k}^{(i)}$ and $\rho_{k}^{(10^{5})}$ in semi-log
scale.}
    \label{fig:prior distance}
    \end{center}
\end{figure}

In this section, we propose an alternating minimization algorithm on the basis of Section \ref{sec:Theoretical Results} and show some numerical examples of the proposed algorithm.

\subsection{Alternating Minimization Algorithm}\label{subsec:Alternating Minimization Algorithm}
The alternating minimization algorithm is proposed as follows:

\begin{alg}
    \hspace{1pt}
    \begin{description}
        \item[Step 1] Initialize the prior $\rho^{(0)} \in \mathcal{R}$.
        \item[Step 2] Calculate the optimal policy $\pi^{(i)} := \pi^{\rho^{(i)}}$ of Problem \ref{prob:MIOCP without terminal constraint} with $\rho = \rho^{(i)}$ fixed. 
        \item[Step 3] Calculate the optimal prior $\rho^{(i+1)} := \rho^{\pi^{(i)}}$ of Problem \ref{prob:MIOCP without terminal constraint} with $\pi = \pi^{(i)}$ fixed.
        Go back to Step 2. 
    \end{description}
    \qedtheorem
    \label{alg:alternating local minimization algorithm}
\end{alg}

Because $\pi^{\rho} \in \mathcal{P}$ and $\rho \in \mathcal{R}$ for $\rho \in \mathcal{R}$ and $\pi \in \mathcal{P}$, we can calculate $\pi^{(i)} \in \mathcal{P}$ in Step 2 and $\rho^{(i)} \in \mathcal{R}$ in Step 3 for all $i \in \mathbb{Z}_{\geq 0}$ by Theorems \ref{thm:optimal policy for fixed prior} and \ref{thm:optimal prior for fixed policy}, respectively.
Note that Algorithm \ref{alg:alternating local minimization algorithm} supposes that the assumption of Theorem \ref{thm:optimal policy for fixed prior} hold to calculate $\pi^{(i)}$ in Step 2.

\begin{rem}
    For any $\rho^{(0)} \in \mathcal{R}$, the sequence $\{J(\pi^{(i)},\rho^{(i)})\}_{i\in \mathbb{Z}_{\geq 0}}$ generated by Algorithm \ref{alg:alternating local minimization algorithm} is nonincreasing because the policy and the prior are optimized to minimize $J$ at each iteration.
    In addition, $\{J(\pi^{(i)},\rho^{(i)})\}_{i\in \mathbb{Z}_{\geq 0}}$ converges to a nonnegative value because $\{J(\pi^{(i)},\rho^{(i)})\}_{i\in \mathbb{Z}_{\geq 0}}$ is nonincreasing and $J(\pi^{(i)},\rho^{(i)}) \geq 0$ for any $i \in \mathbb{Z}_{\geq 0}$.
    \qedtheorem
    \label{rem:property of proposed algorithm}
\end{rem}

\subsection{Numerical Examples}\label{subsec:Numerical Examples}

This subsection shows numerical examples in the following setting.
The terminal time is given by $T=50$.
The system is given by
\begin{align*}
   &A_{k} = \begin{bmatrix}
        0.9 & 0.2\\
        0.1 & 1.1
    \end{bmatrix},
    B_{k} = \begin{bmatrix}
        0\\
        0.2
    \end{bmatrix}, \Sigma_{w_{k}}=10^{-3}I\ \forall k.
\end{align*}
The eigenvalues of $A_{k}$ are $0.8268$ and $1.1732$ and the corresponding eigenvectors are $[1, -0.3660]^{\top}$ and $[1, 1.3660]^{\top}$, respectively.
The parameters in \eqref{eq:objective function of MIOCP without terminal constraint} are given by
\begin{align*}
    \mu_{x_{\text{fin}}} = [2, 2]^{\top}, F=10I, R_{k} = I, k\in \llbracket 0,T-1 \rrbracket.
\end{align*}
The distribution of the initial state is given by
\begin{align*}
    \mu_{x_{\text{ini}}} = 0, \Sigma_{x_{\text{ini}}}=I.
\end{align*}
The initialized prior $\rho^{(0)},\rho_{k}^{(0)}(\cdot)=\mathcal{N}(\mu_{\rho_{k}^{(0)}}, \Sigma_{\rho_{k}^{(0)}})$ is given by
\begin{align*}
    \mu_{\rho_{k}^{(0)}} = 0, \Sigma_{\rho_{k}^{(0)}}=I\ \forall  k.
\end{align*}

In Fig. \ref{fig:sample paths of proposed algorithm}, the sample paths of the state process under $\pi^{(i)}$ are shown for different $i$ and $\varepsilon$.
The colored lines indicate the samples.
The black dashed circles at $k=0$ in Figs. \ref{fig:i1000000_eps0p1}--\ref{fig:i20_eps4} are the $2\sigma$ covariance ellipses for $\mathcal{N}(\mu_{x_{\text{ini}}},\Sigma_{x_{\text{ini}}})$, which is given by
\begin{align*}
    \{x\in \mathbb{R}^{n}\mid \|x-\mu_{x_{\text{ini}}}\|^{2}_{\Sigma_{x_{\text{ini}}}^{-1}}=2^{2}\}.
\end{align*}
In addition, the black dashed circles at $k=T$ in Figs. \ref{fig:i1000000_eps0p1}--\ref{fig:i20_eps4} highlight the terminal target state $\mu_{x_{\text{fin}}}$, which is given by
\begin{align*}
    \{x\in \mathbb{R}^{n}\mid \|x-\mu_{x_{\text{fin}}}\|=0.5\}.
\end{align*}
Fig. \ref{fig:scatter of the terminal state samples} is the scatter plot of $x_{T}$ of Fig. \ref{fig:i1000000_eps4}.
The blue circles indicate the samples of $x_{T}$ of Fig. \ref{fig:i1000000_eps4} and the red line is the linear regression of the samples, which is given by
\begin{align}
    x_{T}^{(2)} = 1.3508x_{T}^{(1)} - 0.1954.\label{eq:linear regression of the terminal state}
\end{align}
Figs. \ref{fig:objective function} and \ref{fig:prior distance} show semi-log plots of $J(\pi^{(i)},\rho^{(i)})$ and $\sum_{k=0}^{T-1}W_{2}(\rho_{k}^{(i)},\rho_{k}^{(10^{5})})^{2}$ for $\varepsilon = 0.1,4$, respectively, where $W_{2}(p_{1},p_{2})$ is the Wasserstein $2$-distance \cite{villani2008optimal} between two probability distributions $p_{1}$ and $p_{2}$.
For better readability, the trajectories in Figs. \ref{fig:objective function} and \ref{fig:prior distance} are plotted only for $i \in \llbracket0, 30\rrbracket$ and $i\in \llbracket 0,1000 \rrbracket$, respectively.

First, let us discuss how increasing $\varepsilon$ affects the state process.
Because $\varepsilon$ is the coefficient of the KL divergence term, as $\varepsilon$ increases, the policy $\pi$ approaches the prior $\rho$ of the feedforward control law, meaning that $\pi$ will begin to behave like a feedforward control.
It therefore is expected that the first term of \eqref{eq:evolution of covariance matrix of state} can be approximated as $(A_{k}+B_{k}P_{k})\Sigma_{x_{k}}(A_{k}+B_{k}P_{k})^{\top}\simeq A_{k}\Sigma_{x_{k}}A_{k}^{\top}$ when $\varepsilon$ is large.
In addition, $\Sigma_{\pi_{k}}$ tends to be small to decrease the quadratic costs.
Furthermore, a small noise covariance matrix is used in this example.
We therefore approximate \eqref{eq:evolution of covariance matrix of state} roughly as $\Sigma_{x_{k+1}}\simeq A_{k}\Sigma_{x_{k}}A_{k}^{\top}$ when $\varepsilon$ is large.
Then, it is expected that as $\varepsilon$ increases, the state $x_{k}$ will spread into the space spanned by the eigenvectors corresponding to eigenvalues greater than $1$ of $A_{k}$.
In fact, the direction $[1,1.3508]^{\top}$ of the linear regression \eqref{eq:linear regression of the terminal state} is close to the eigenvector $[1,1.3660]$ of $A_{k}$ associated with the eigenvalue greater than $1$.

Next, we consider the convergence criterion for Algorithm \ref{alg:alternating local minimization algorithm}.
In optimization algorithms, convergence criteria are typically determined on the basis of the convergence of the objective function or the decision variables.
As can be seen from Fig. \ref{fig:objective function}, $J(\pi^{(i)},\rho^{(i)})$ decreases monotonically for both $\varepsilon=0.1, 4$, which is consistent with Remark \ref{rem:property of proposed algorithm}.
In addition, it can also be seen that $J(\pi^{(i)},\rho^{(i)})$ has nearly converged after a dozen or so iterations.
On the other hand, from Fig. \ref{fig:prior distance} where $\rho^{(10^{5})}$ is regarded as the tentative convergence destination of $\rho^{(i)}$, the convergence of the decision variables is slower. 
In fact, for $\varepsilon=4$, although the objective function values at $i = 20$ and $i = 10^{5}$ are almost the same, the state processes under $\pi^{(20)}$ exhibit more fluctuations than those under $\pi^{(10^{5})}$, as can be seen in Figs. \ref{fig:i1000000_eps4} and \ref{fig:i20_eps4}.
Given these results, the convergence criterion of Algorithm 9 should be designed on the basis of the convergence of the decision variables.

\section{Conclusion}\label{sec:Conclusion}
This paper considered an MIOCP for linear systems as a min-min optimization problem of the policy and prior.
Under a restriction of the policy and prior classes, we derived the unique optimal policy and prior of the MIOCP with the prior and policy fixed, respectively.
On the basis of this result, we proposed an alternating minimization algorithm for the MIOCP.
Future work includes a more detailed analysis of the optimal solution and the convergence property of the algorithm.
Another direction of future work is elucidating aspects of robust control and the Schr\"{o}dinger bridge in the context of mutual information optimal control, which is expected to clarify the benefits of mutual information optimal control.

\appendices

\section{Proof of Theorem \ref{thm:optimal policy for fixed prior}}
Define the value function associated with Problem \ref{prob:MIOCP without terminal constraint} with $\rho$ fixed as
\begin{align*}
    &V(k,x) := \min_{\pi_{k}}\mathbb{E}\left[ \frac{1}{2}\|u_{k}\|_{R_{k}}^{2} + \varepsilon \mathcal{D}_{\text{KL}}\left[\pi_{k}(\cdot|x)\| \rho_{k}(\cdot)\right] \right. \\
    & \hspace{45pt}\left.+ \mathbb{E}[V(k+1, A_{k}x + B_{k}u_{k}  +w_{k}) ] \mid x_{k} = x\right],\\
    &x \in \mathbb{R}^{n}, k \in \llbracket 0, T-1 \rrbracket,\\
    &V(T,x) := \|x-r_{T}\|_{\Pi_{T}}^{2}, x\in \mathbb{R}^{n}.
\end{align*}
In addition, define the corresponding Q-function as
\begin{align*}
    &Q_{k}(x,u) := \frac{1}{2}\|u\|_{R_{k}}^{2} + \mathbb{E}[ V(k+1, A_{k}x + B_{k}u +w_{k})],\\
    &x\in \mathbb{R}^{n}, u \in \mathbb{R}^{m}, k \in \llbracket 0, T-1 \rrbracket.
\end{align*}
Then, we have
\begin{align*}
    &\mathbb{E}\left[ \frac{1}{2}\|u_{k}\|_{R_{k}}^{2} + \varepsilon \mathcal{D}_{\text{KL}}\left[\pi_{k}(\cdot | x)\| \rho_{k}(\cdot)\right]\right.\\
    &\hspace{10pt}\left.+ \mathbb{E}[V(k+1, A_{k}x + B_{k}u_{k}  +w_{k}) ] \mid x_{k} = x\right]\\
    =&\varepsilon \left\{\mathcal{D}_{\text{KL}}\left[\pi_{k}(\cdot | x) \middle\| \rho_{k}(\cdot)\exp\left(-\frac{1}{\varepsilon }Q_{k}(x, \cdot)\right)/z_{k}\right]\right.\\
    &\hspace{17pt}\left. -\log{z_{k}}\right\},
\end{align*}
where $z_{k}:= \int_{\mathbb{R}^{m}} \rho_{k}(u)\exp\left(-\frac{1}{\varepsilon}Q_{k}(x,u)\right)du$.
Therefore, it follows that
\begin{align}
    \pi_{k}^{\rho}(u|x) \propto \rho_{k}(u)\exp\left(-\frac{1}{\varepsilon}Q_{k}(x, u)\right). \label{eq:optimal policy for fixed prior using Q-function}
\end{align}
Now, let us calculate $Q_{T-1}(x,u)$.
Considering that $\Sigma_{Q_{T-1}}$ is positive definite and $A_{T-1}$ is invertible thanks to Lemma \ref{lem:positive semidefiniteness of Pi} and the assumption of Theorem \ref{thm:optimal policy for fixed prior}, respectively, it follows that
\begin{align*}
    &Q_{T-1}(x,u)\\
    =& \frac{1}{2}\|u\|_{R_{T-1}}^{2} + \frac{1}{2} \mathbb{E}[\|A_{T-1}x + B_{T-1}u-r_{T} +w_{k}\|_{\Pi_{T}}^{2} ]\\
    =& \frac{1}{2}\|u - \mu_{Q_{T-1}}\|_{\varepsilon \Sigma_{Q_{T-1}}^{-1}}^{2} + \frac{1}{2}\|x-A_{T-1}^{-1}r_{T}\|_{\Gamma_{T-1}}^{2} \\
    &+\frac{1}{2}\mathrm{Tr}[\Pi_{T}\Sigma_{w_{T-1}}],
\end{align*}
where $\Gamma_{k} := A_{k}^{\top}\Pi_{k+1}A_{k} - A_{k}^{\top}\Pi_{k+1} B_{k}(R_{k} + B_{k}^{\top}\Pi_{k+1}B_{k})^{-1}B_{k}^{\top}\Pi_{k+1}A_{k}, k\in \llbracket0,T-1 \rrbracket$.
By substituting this into \eqref{eq:optimal policy for fixed prior using Q-function}, we have the policy \eqref{eq:optimal policy for fixed prior} for $k = T-1$.

The value function for $k = T-1$ can be calculated as
\begin{align}
    &V(T-1, x) = -\varepsilon \log{z_{T-1}} \nonumber\\
    = &\frac{1}{2}\|x-A_{T-1}^{-1}r_{T}\|_{\Gamma_{T-1}}^{2}-\frac{\varepsilon}{2}\log\{(2\pi)^{m}|\Sigma_{Q_{T-1}}|\} \nonumber \\
    &+\frac{1}{2}\mathrm{Tr}[\Pi_{T}\Sigma_{w_{T-1}}]\nonumber\\
    & - \varepsilon \log \left\{ \int_{\mathbb{R}^{m}} \mathcal{N}(\mu_{Q_{T-1}},\Sigma_{Q_{T-1}})\rho_{T-1}(u)du\right\}. \label{eq:value function for k = T-1}
\end{align}
The argument of the logarithm of the last term can be calculated as

\begin{align*}
    &\int_{\mathbb{R}^{m}} \mathcal{N}(\mu_{Q_{T-1}},\Sigma_{Q_{T-1}})\rho_{T-1}(u)du\\
    =&\frac{1}{\sqrt{(2\pi)^{m}|\Sigma_{\rho_{T-1}}+\Sigma_{Q_{T-1}}|}}\\
    &\times\exp\left(-\frac{1}{2}\|\mu_{\rho_{T-1}}-\mu_{Q_{T-1}}\|_{(\Sigma_{\rho_{T-1}}+\Sigma_{Q_{T-1}})^{-1}}^{2}\right).
\end{align*}
Therefore, \eqref{eq:value function for k = T-1} can be rewritten as
\begin{align*}
    &V(T-1, x)\\
    = &\frac{1}{2}\|x-A_{T-1}^{-1}r_{T}\|_{\Gamma_{T-1}}^{2}+\frac{\varepsilon}{2}\log\frac{|\Sigma_{\rho_{T-1}}+\Sigma_{Q_{T-1}}|}{|\Sigma_{Q_{T-1}}|}\\
    &+\frac{\varepsilon}{2}\|\mu_{\rho_{T-1}}-\mu_{Q_{T-1}}\|_{(\Sigma_{\rho_{T-1}}+\Sigma_{Q_{T-1}})^{-1}}^{2} +\frac{1}{2}\mathrm{Tr}[\Pi_{T}\Sigma_{w_{T-1}}].
\end{align*}
By substituting \eqref{eq:mean of Q} into $V(T-1,x)$, we have % and arranging $V(T-1,x)$ with respect to $x$
\begin{align}
    &V(T-1, x) \nonumber\\
    = &\frac{1}{2}\|x-r_{T-1}\|_{\Pi_{T-1}}^{2} +(\text{Terms independent of }x).\label{eq:arranged value function for k = T-1}
\end{align}
Since the first term of the right-hand side of \eqref{eq:arranged value function for k = T-1} takes the same form as $V(T, x)$, we can derive the policy \eqref{eq:optimal policy for fixed prior} for $k = T-2, T-3, \ldots, 0,$ recursively by following the same procedure as for $k = T-1$, which completes the proof.

\bibliographystyle{IEEEtran}
\bibliography{IEEEabrv, LCSSMIOC.bib}

\end{document}